\newtheorem{lemma}{Lemma}[section]
\newtheorem{theorem}[lemma]{Theorem}
\newtheorem{proposition}[lemma]{Proposition}
\newtheorem{corollary}[lemma]{Corollary}
\theoremstyle{definition}
\newtheorem{definition}[lemma]{Definition}
\numberwithin{equation}{section}
\numberwithin{figure}{section}
\newcommand{\Xset}{\mathcal{X}}
\DeclarePairedDelimiter\floor{\lfloor}{\rfloor}
\begin{document}

\title{On a scale of criteria on $n$-dependence}
\author{Davit Voskanyan}
\email{ysudav@gmail.com}
\address{Department of Informatics and Applied Mathematics, Yerevan State University, A. Manukyan St. 1, 0025 Yerevan, Armenia}
\classification{14H50, 41A05, 41A63}
\keywords{Plane algebraic curve, intersection point, $n$-poised set, $n$-independent set}

\begin{abstract}
In this paper we prove that a planar set $\mathcal{X}$ of at most $mn-1$ points, where $m \le n$, is $\kappa$-dependent, if and only if there exists a number r, $1 \le r \le m-1$, and an essentially $\kappa$-dependent subset $\mathcal{Y} \subset \mathcal{X}$, $\#\mathcal{Y} \ge rs$, where $r + s - 3 = \kappa$, belonging to an algebraic curve of degree $r$, and not belonging to any curve of degree less than $r$. 
Moreover, if $\#\mathcal{Y} = rs$ then the set $\mathcal{Y}$ coincides with the set of intersection points of some two curves of degrees $r$ and $s$, respectively. 

Let us mention that the first three criteria of the scale, for $m=1,2,3,$ are well-known results.
\end{abstract}

\maketitle


\section{Introduction\label{Sec1}, $n$-independence\label{Sec2}}

Denote by $\Pi_n$ the space of bivariate algebraic polynomials of total degree less than or equal to $n$. Its dimension is given by
\begin{equation*}\label{eq:dim}
  N := \dim \Pi_n = \binom{n+2}{2}.
\end{equation*}
A plane algebraic curve is the zero set of some bivariate
polynomial. To simplify notation, we shall use the same letter $p$,
say, to denote the polynomial $p$ and the curve given by the
equation $p(x,y)=0$. More precisely, suppose $p$ is a polynomial
without multiple factors. Then the plane curve defined by the
equation $p(x,y)=0$ shall also be denoted by $p$. So lines, conics, and cubics are equivalent to polynomials of degree
$1$, $2$, and $3$, respectively.

Suppose a set of $k$ distinct points is given:
\begin{equation*}\label{eq:knset}{\mathcal X}_k =
\{(x_i,y_i): i=1,2,\ldots,k\}\subset \mathbb{C}^2.\end{equation*}
The problem of finding a polynomial $p \in \Pi_n$ which satisfies
the conditions
\begin{equation}\label{eq:intpr}
  p(x_i,y_i) = c_i, \quad i=1, \ldots, k,
\end{equation}
is called \emph{interpolation problem}. We denote this problem by $(\Pi_n, \mathcal X).$ The polynomial $p$ is called \emph{interpolating polynomial}.

\begin{definition}
The set of points ${\mathcal X}_k$ is called  \emph{$n$-poised}, if for any data $(c_1,\ldots, c_k)$,
there is a \emph{unique} polynomial $p\in\Pi_n$ satisfying the
conditions \eqref{eq:intpr}.
\end{definition}
\noindent By a Linear Algebra argument a necessary condition for
$n$-poisedness is
\begin{equation*}
  k = \#{\mathcal X}_k = \dim \Pi_n = N .
\end{equation*}

\begin{definition}
 The interpolating problem $(\Pi_n, {\mathcal X}_k)$ is called $n$-solvable, if for any data $(c_1, \ldots c_k)$, there exists a  (not  necessarily  unique) polynomial $p \in \Pi_n$ satisfying the conditions \eqref{eq:intpr}.
\end{definition}


 A polynomial $p\in\Pi_n$ is called
\emph{$n$-fundamental polynomial} of a point
$A\in{\mathcal X},$  if
\begin{equation*}\label{eq:fundpol}
  p(A) = 1 \qquad\text{and}\qquad  p\big\vert_{{\mathcal X}\setminus\{A\}} = 0,
\end{equation*}
where $p\big\vert_{\mathcal X}$ means the restriction of $p$ to
${\mathcal X}.$
We shall denote such a polynomial by $p_{A,{\mathcal X}}^\star.$

\noindent Sometimes we call $n$-fundamental also a polynomial from $\Pi_n$
that just vanishes at all the points of ${\mathcal X}$ but $A,$ since such
a polynomial is a nonzero constant multiple of $p_A^\star.$
A fundamental polynomial can be described as a plane curve containing all but one point of $\Xset.$

Next we consider an important concept of $n$-independence and $n$-dependence of point sets (see \cite{EGH96}, \cite{HJZ09a}, \cite{HM13}).

\begin{definition}
A set of points ${\mathcal X}$ is called \emph{$n$-independent}, if each its point has an $n$-fundamental polynomial. Otherwise, it is called \emph{$n$-dependent}.
\end{definition}
Since the fundamental polynomials are linearly independent, we get that $\#\mathcal{X} \le N$ is a necessary condition for n-independence.

\begin{proposition}
A set $\mathcal X$ is $n$-independent if and only if the interpolation problem $(\Pi_n, \mathcal X)$ is $n$-solvable.
\end{proposition}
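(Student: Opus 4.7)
The proof is a direct two-way implication using the very definitions of $n$-independence and $n$-solvability; there is no real technical obstacle, just a clean bookkeeping of linear combinations and choices of data.

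For the forward direction, I would assume $\mathcal{X}$ is $n$-independent, so that for every point $A \in \mathcal{X}$ there is an $n$-fundamental polynomial $p_{A,\mathcal{X}}^\star \in \Pi_n$ with $p_{A,\mathcal{X}}^\star(A) = 1$ and vanishing on $\mathcal{X}\setminus\{A\}$. Given arbitrary data $\{c_A\}_{A\in\mathcal{X}}$, form the explicit interpolant
\begin{equation*}
  p := \sum_{A \in \mathcal{X}} c_A\, p_{A,\mathcal{X}}^\star \in \Pi_n.
\end{equation*}
Evaluating at any $B \in \mathcal{X}$ kills every term except $A = B$, yielding $p(B) = c_B$, so the interpolation problem $(\Pi_n,\mathcal{X})$ is $n$-solvable.

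For the converse, I would assume $(\Pi_n, \mathcal{X})$ is $n$-solvable and aim to produce an $n$-fundamental polynomial at each point. Fix $A \in \mathcal{X}$ and choose the specific data $c_A = 1$ and $c_B = 0$ for all $B \in \mathcal{X} \setminus \{A\}$. By $n$-solvability, some $p \in \Pi_n$ satisfies $p(A) = 1$ and $p|_{\mathcal{X}\setminus\{A\}} = 0$, which is exactly the definition of an $n$-fundamental polynomial of $A$. Since $A$ was arbitrary, every point of $\mathcal{X}$ admits such a polynomial, and $\mathcal{X}$ is $n$-independent.

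The only step that requires a small remark is the forward direction, where one should note that the sum $p = \sum_A c_A p_{A,\mathcal{X}}^\star$ indeed lies in $\Pi_n$ (finite sum of polynomials of degree $\le n$) and that the fundamental polynomials cleanly decouple on $\mathcal{X}$. Both directions together give the stated equivalence; no genericity, dimension count, or curve-theoretic argument is needed here, as the statement is essentially the interpolation-theoretic translation of the definition of fundamental polynomials.
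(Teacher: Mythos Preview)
Your proof is correct and follows essentially the same approach as the paper: the forward direction via the Lagrange-type interpolant $p=\sum c_A\,p_{A,\mathcal{X}}^\star$, and the converse by specializing the data to $c_A=1$, $c_B=0$ for $B\neq A$ to extract the fundamental polynomial at $A$. Your write-up is in fact slightly more detailed than the paper's, but the argument is identical.
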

\begin{proof}
Suppose $\mathcal X:={\mathcal X}_k.$ In the case of $n$-independence we have the following Lagrange formula for a polynomial $p\in\Pi_n$ satisfying interpolating conditions \eqref{eq:intpr}:
$$p = \sum_{i=1}^{k} c_i p_i^\star.$$

On the other hand if the interpolation problem is $n$-solvable then for each point $(x_i,y_i),\ i=1,\ldots,k,$ there exists an $n$-fundamental polynomial. Indeed, it is the solution of the interpolation problem \eqref{eq:intpr}, where $c_i = 1,$ and $c_j = 0\ \forall j \ne i.$
\end{proof}

\begin{definition}\label{def:depset}
 A set of points ${\mathcal X}$ is called
\emph{essentially $n$-dependent}, if none of its points has an $n$-fundamental polynomial.
\end{definition}

If a point set ${\mathcal X}$ is $n$-dependent, then for some $A\in{\mathcal X}$, there is
 no $n$-fundamental polynomial, which means
that for any polynomial $p\in\Pi_n$ we have that
\begin{equation*}
  p\big\vert_{{\mathcal X}\setminus\{A\}} = 0
  \quad \implies \quad p(A)=0.
\end{equation*}

Thus a set $\mathcal X$ is essentially $n$-dependent means that any
plane curve of degree $n$ containing all but one point of $\mathcal X$, contains
all of $\mathcal X$.

In the proof of the main result we will need the following
\begin{proposition}[\cite{HM15}, Cor.~2.2]
Suppose a set $\mathcal{X}$ is given. Denote by $\mathcal{Y}$ the subset of $\mathcal{X}$ that have $n$-fundamental polynomials with respect to $\mathcal{X}$. Then the set $\mathcal{X} \setminus \mathcal{Y}$ is essentially $n$-dependent.
\end{proposition}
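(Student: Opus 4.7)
The plan is to argue by contradiction: assume some point $B \in \mathcal{X}\setminus\mathcal{Y}$ has an $n$-fundamental polynomial \emph{with respect to the smaller set} $\mathcal{X}\setminus\mathcal{Y}$, and then manufacture from it an $n$-fundamental polynomial for $B$ with respect to the original set $\mathcal{X}$, contradicting $B \notin \mathcal{Y}$.

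So suppose, towards a contradiction, that $\mathcal{X}\setminus\mathcal{Y}$ is not essentially $n$-dependent. Then there exist a point $B \in \mathcal{X}\setminus\mathcal{Y}$ and a polynomial $q\in\Pi_n$ such that $q(B)=1$ and $q\equiv 0$ on $(\mathcal{X}\setminus\mathcal{Y})\setminus\{B\}$. By the definition of $\mathcal{Y}$, every point $A\in\mathcal{Y}$ has an $n$-fundamental polynomial $p_{A,\mathcal{X}}^\star\in\Pi_n$ with respect to $\mathcal{X}$, so $p_{A,\mathcal{X}}^\star(A)=1$ and $p_{A,\mathcal{X}}^\star\equiv 0$ on $\mathcal{X}\setminus\{A\}$.

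The key step is to form the correction
\begin{equation*}
\tilde q \;:=\; q \;-\; \sum_{A\in\mathcal{Y}} q(A)\, p_{A,\mathcal{X}}^\star \;\in\;\Pi_n .
\end{equation*}
I would then verify the three required conditions on $\tilde q$:
for $A\in\mathcal{Y}$, the orthogonality properties of the $p_{A,\mathcal{X}}^\star$ yield $\tilde q(A)=q(A)-q(A)=0$;
for $B$, since $B\in\mathcal{X}\setminus\{A\}$ for every $A\in\mathcal{Y}$, we have $p_{A,\mathcal{X}}^\star(B)=0$, so $\tilde q(B)=q(B)=1$;
and for any other $B'\in(\mathcal{X}\setminus\mathcal{Y})\setminus\{B\}$, both $q(B')=0$ and $p_{A,\mathcal{X}}^\star(B')=0$, so $\tilde q(B')=0$. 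Thus $\tilde q$ is an $n$-fundamental polynomial of $B$ with respect to the whole set $\mathcal{X}$, forcing $B\in\mathcal{Y}$ and contradicting the choice of $B$.

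There is no real obstacle here; the only thing to be a little careful about is that the correction $\sum_{A\in\mathcal{Y}} q(A)p_{A,\mathcal{X}}^\star$ lives in $\Pi_n$ (which is automatic since it is a finite linear combination of elements of $\Pi_n$) and that the vanishing of $p_{A,\mathcal{X}}^\star$ is exploited at the right points---namely at every point of $\mathcal{X}$ except $A$ itself, which is precisely what allows both the points of $\mathcal{Y}$ and the points of $\mathcal{X}\setminus\mathcal{Y}$ to be handled in one stroke.
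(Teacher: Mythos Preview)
Your argument is correct: the correction $\tilde q = q - \sum_{A\in\mathcal{Y}} q(A)\, p_{A,\mathcal{X}}^\star$ is exactly the right device, and the three verifications go through as you describe. Note, however, that the paper does not supply its own proof of this proposition; it is quoted from \cite{HM15} (Cor.~2.2) and used as a black box, so there is no in-paper argument to compare against. Your proof is the standard one and would be entirely appropriate here.
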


\begin{corollary}\label{essential}
Any $n$-dependent point set has essentially $n$-dependent subset.
\end{corollary}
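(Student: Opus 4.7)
The plan is to reduce the statement directly to the preceding proposition taken from \cite{HM15}. Given an $n$-dependent set $\mathcal{X}$, I would define $\mathcal{Y} \subseteq \mathcal{X}$ to be the set of those points of $\mathcal{X}$ that do admit an $n$-fundamental polynomial with respect to $\mathcal{X}$, and then take the candidate essentially $n$-dependent subset to be $\mathcal{X} \setminus \mathcal{Y}$.

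Next I would verify that this complement is nonempty. Since $\mathcal{X}$ is $n$-dependent, by the definition of $n$-independence there must exist at least one point $A \in \mathcal{X}$ that fails to admit an $n$-fundamental polynomial with respect to $\mathcal{X}$, and any such $A$ lies in $\mathcal{X} \setminus \mathcal{Y}$. Invoking the preceding proposition then immediately asserts that $\mathcal{X} \setminus \mathcal{Y}$ is essentially $n$-dependent, producing the required subset.

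There is no real obstacle: the corollary is effectively a reformulation of the preceding proposition, and the only substantive point is to observe that $n$-dependence of $\mathcal{X}$ is precisely what forces $\mathcal{X} \setminus \mathcal{Y}$ to be nonempty, so that Definition~\ref{def:depset} applies to a genuine (nonempty) subset rather than vacuously.
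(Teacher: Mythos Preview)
Your proposal is correct and matches the paper's approach exactly: the paper states Corollary~\ref{essential} immediately after the cited proposition from \cite{HM15} without any further argument, treating it as an obvious consequence, and your write-up supplies precisely the missing observation that $n$-dependence of $\mathcal{X}$ forces $\mathcal{X}\setminus\mathcal{Y}$ to be nonempty.
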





\vspace{2mm}

Set
$$d(n,k):=\dim\Pi_n-\dim\Pi_{n-k}.$$
It is easily seen that $d(n,k)=(n+1)+n+\cdots+(n-k+2)=\frac{1}{2}k(2n-k+3),$ if $k \le n$.

In the sequel we will need the following well-known proposition (see, e.g., \cite{Raf11}, Proposition 3.1).

\begin{proposition}\label{prop:d(n,k)}
    Let $q$ be a curve of degree $k$ without multiple components and $k \le n$. Then the following assertions hold:

    \begin{enumerate}
        \item Any set of more than $d(n, k)$ points located on the curve $q$ is $n$-dependent;
        \item Any set $\mathcal{X}$ of $d(n, k)$ points located on the curve $q$ is $n$-independent if and only if
        $$p \in \Pi_n,\ p\big\vert_{\mathcal X}=0 \Rightarrow p=fq, \ \hbox{where}\ f \in \Pi_{n-k}.$$
    \end{enumerate}
\end{proposition}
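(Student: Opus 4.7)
The plan is to reduce the whole statement to a dimension count for the evaluation map
$$E\colon \Pi_n \to \mathbb{C}^{\#\mathcal{X}}, \qquad p \mapsto (p(P))_{P \in \mathcal{X}}.$$
Observe that $\mathcal{X}$ is $n$-independent precisely when $E$ is surjective: hitting the $i$-th standard basis vector is exactly the statement that the $i$-th point admits an $n$-fundamental polynomial. So both parts of the proposition become assertions about the rank of $E$.

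The structural input I would use is the identification of the kernel of the restriction map $\Pi_n \to \Pi_n|_q$. Since $q$ has degree $k$ and no multiple components, Study's lemma (equivalently, the Hilbert Nullstellensatz applied to the reduced ideal $(q) \subset \mathbb{C}[x,y]$) forces any $p \in \Pi_n$ with $p|_q \equiv 0$ to be divisible by $q$, i.e.\ $p = fq$ with $f \in \Pi_{n-k}$. Hence the kernel of restriction equals $q \cdot \Pi_{n-k}$, of dimension $\dim \Pi_{n-k}$, so the image of restriction has dimension exactly $d(n,k)$.

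Now (i) is immediate: since $\mathcal{X} \subset q$, the map $E$ factors through restriction to $q$, so $\operatorname{rank} E \le d(n,k) < \#\mathcal{X}$; thus $E$ is not surjective and $\mathcal{X}$ is $n$-dependent. For (ii), with $\#\mathcal{X} = d(n,k)$, surjectivity of $E$ is equivalent to $\dim \ker E = N - d(n,k) = \dim \Pi_{n-k}$. The subspace $q \cdot \Pi_{n-k}$ is always contained in $\ker E$ and already has dimension $\dim \Pi_{n-k}$, so $n$-independence is equivalent to the equality $\ker E = q \cdot \Pi_{n-k}$, which is exactly the stated implication $p|_\mathcal{X} = 0 \Rightarrow p = fq$ with $f \in \Pi_{n-k}$.

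The main obstacle I anticipate is not the dimension-counting bookkeeping but the invocation of Study's lemma, since it is the step that uses the no-multiple-components hypothesis in an essential way. If a self-contained argument is wanted, I would reduce to the distinct irreducible factors of $q$ and use unique factorization in $\mathbb{C}[x,y]$, together with the fact that a polynomial vanishing on an irreducible plane curve must be divisible by its defining polynomial.
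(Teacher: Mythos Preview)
Your argument is correct. The reduction to the evaluation map, the identification of its kernel with $q\cdot\Pi_{n-k}$ via Study's lemma (which is precisely where the no-multiple-components hypothesis enters), and the rank--nullity bookkeeping all go through as you describe. One small point worth making explicit: the map ``multiplication by $q$'' from $\Pi_{n-k}$ into $\Pi_n$ is injective because $q\neq 0$ and $\mathbb{C}[x,y]$ is a domain, so $q\cdot\Pi_{n-k}$ genuinely has dimension $\dim\Pi_{n-k}$; you use this implicitly in part (ii).

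As for comparison: the paper does not supply its own proof of this proposition. It is stated as a well-known result and attributed to Rafayelyan \cite{Raf11}, Proposition~3.1. Your write-up is exactly the standard dimension-count argument one would expect behind such a citation, so there is no divergence of method to discuss --- you have simply filled in what the paper leaves to the reference.
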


\begin{corollary}\label{cor} The following assertions hold:
 \begin{enumerate}
\item  Any set of at least $n+2$ points located on a line is $n$-dependent;
\item  Any set of at least  $2n+2$ points located on a conic is $n$-dependent;
\item  Any set of at least  $3n+1$ points located on a cubic is $n$-dependent.
\end{enumerate}
\end{corollary}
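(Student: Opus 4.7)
The plan is to deduce each of the three assertions directly from Proposition \ref{prop:d(n,k)}(i), after evaluating $d(n,k)$ for $k=1,2,3$. From the closed form $d(n,k) = \tfrac12 k(2n-k+3)$ (valid for $k \le n$) a short computation yields
\begin{equation*}
d(n,1) = n+1, \qquad d(n,2) = 2n+1, \qquad d(n,3) = 3n.
\end{equation*}
Thus ``more than $d(n,k)$ points on $q$'' translates exactly into ``at least $n+2$'', ``at least $2n+2$'', and ``at least $3n+1$'' points in the three cases, which is precisely what needs to be shown.

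Under the paper's standing convention (stated just after \eqref{eq:dim}) that a curve is given by a polynomial without multiple factors, every line, conic, or cubic in the statement automatically has no multiple components, so the hypothesis of Proposition \ref{prop:d(n,k)} is met. Hence in the regime $k \le n$ — namely any $n \ge 1$ for (i), any $n \ge 2$ for (ii), and any $n \ge 3$ for (iii) — the corollary is immediate.

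The only points that need a separate word are the low-$n$ corner cases where $k > n$, namely a conic with $n=1$, and a cubic with $n \in \{1,2\}$. In each of these the claimed threshold ($2n+2$ or $3n+1$) already exceeds $N = \binom{n+2}{2}$ (equal to $3$ or $6$, respectively), so any such point set has cardinality greater than $\dim \Pi_n$. Because $\#\mathcal{X} \le N$ is a necessary condition for $n$-independence (remarked right after Definition 1.4), these sets are automatically $n$-dependent, covering the edge cases.

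I do not anticipate a real obstacle here: the proof is bookkeeping around the formula for $d(n,k)$ and a dimension check in the three small cases. The mildly delicate point, worth stating explicitly, is the reduction to the ``no multiple components'' hypothesis of Proposition \ref{prop:d(n,k)}, which in this paper is built into the very meaning of a curve of a given degree.
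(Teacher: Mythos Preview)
Your proof is correct and matches the paper's (implicit) approach: the corollary is stated without proof immediately after Proposition~\ref{prop:d(n,k)}, and your computation of $d(n,1)=n+1$, $d(n,2)=2n+1$, $d(n,3)=3n$ together with the dimension check for the low-$n$ edge cases spells out precisely what the paper leaves to the reader. One cosmetic point: the remark that $\#\mathcal{X}\le N$ is necessary for $n$-independence appears just after the definition of $n$-independence (Definition~1.3 in the shared numbering), not after ``Definition~1.4''.
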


\section{Some known results}
\vspace{2mm}
Let us start with the three known results which coincide with the first three items of the scale established in this paper, respectively.

\begin{theorem}[\cite{S}]\label{col:n+1}
Any set $\mathcal{X}$ consisting of at most $n+1$ points is $n$-independent.
\end{theorem}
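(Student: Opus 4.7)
The plan is to construct, for each point $A \in \mathcal{X}$, an explicit $n$-fundamental polynomial as a product of lines. Write $\mathcal{X} = \{A_1, \ldots, A_k\}$ with $k \le n+1$, fix an index $i$, and target $A_i$.

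For every $j \ne i$, I would choose a line $\ell_j$ passing through $A_j$ but not through $A_i$. Such a line exists because there are infinitely many lines through $A_j$, and only one of them passes through the other specified point $A_i$. The product
\[
p(x,y) \;=\; \prod_{\substack{j=1 \\ j\ne i}}^{k} \ell_j(x,y)
\]
is then a polynomial of degree $k-1 \le n$, so $p \in \Pi_n$. By construction, $p$ vanishes at every $A_j$ with $j \ne i$ (because $\ell_j(A_j)=0$) and $p(A_i) \ne 0$ (because none of the chosen lines vanish at $A_i$). After rescaling, this is the $n$-fundamental polynomial $p^\star_{A_i, \mathcal{X}}$. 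Since $i$ was arbitrary, every point of $\mathcal{X}$ admits an $n$-fundamental polynomial, which is exactly the definition of $n$-independence.

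There is essentially no obstacle here; the only thing to verify is the count, namely that the number of line factors $k-1$ does not exceed $n$, which is precisely the hypothesis $k \le n+1$. The mild subtlety is ensuring the chosen lines can simultaneously avoid $A_i$, but each line is chosen independently and the condition of avoiding a single point excludes only one direction at $A_j$, so the construction goes through without difficulty.
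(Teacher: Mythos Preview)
Your argument is correct: for each $A_i$ you build a product of $k-1\le n$ lines, one through each remaining point and avoiding $A_i$, and this gives an $n$-fundamental polynomial after rescaling. The construction is the standard one and there is no gap.

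As for comparison with the paper: the paper does not actually prove this theorem. It is listed in the section ``Some known results'' and simply attributed to Severi~\cite{S} without argument. Your line-product construction is exactly the classical proof one would expect behind that citation, so there is nothing to contrast.
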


\begin{theorem}[\cite{EGH96}, Prop.~1]\label{prop:2n+1}
A set $\mathcal{X}$ with no more than $2n + 2$ points on the plane is $n$-dependent if and only if either $n + 2$ of them are collinear or $\#\mathcal{X}=2n+2$ and all the $2n + 2$ points belong to a conic.
\end{theorem}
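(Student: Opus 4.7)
The sufficiency direction follows directly from Corollary~\ref{cor}: $n+2$ collinear points form an $n$-dependent subset by part~(i), and $2n+2$ points on a conic form an $n$-dependent set by part~(ii). Since a superset of an $n$-dependent set is $n$-dependent (any $n$-fundamental polynomial with respect to the superset restricts to one for the subset), $\mathcal{X}$ inherits $n$-dependence in either case.

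For necessity, assume $\mathcal{X}$ is $n$-dependent with $\#\mathcal{X} \le 2n+2$, and extract via Corollary~\ref{essential} an essentially $n$-dependent subset $\mathcal{Y} \subseteq \mathcal{X}$. Theorem~\ref{col:n+1} forces $\#\mathcal{Y} \ge n+2$. The proof then reduces to the following dichotomy: either $\mathcal{Y}$ contains $n+2$ collinear points (so $\mathcal{X}$ does as well, and we are done), or else $\#\mathcal{Y} = 2n+2$ and $\mathcal{Y}$ lies on a conic, in which case the bound $\#\mathcal{X} \le 2n+2$ forces $\mathcal{X} = \mathcal{Y}$.

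To establish the dichotomy, assume no $n+2$ points of $\mathcal{Y}$ are collinear; then for every $A \in \mathcal{Y}$ each line through $A$ contains at most $n$ points of $\mathcal{Y} \setminus \{A\}$. I would first dispose of the case $\#\mathcal{Y} \le 2n+1$ by a line-covering construction. Fix $A$, let $k = \#(\mathcal{Y}\setminus\{A\}) \le 2n$, and partition $\mathcal{Y}\setminus\{A\}$ into classes by the rule ``$P \sim Q$ iff $P,Q,A$ are collinear''—so the classes are cut out by the lines through $A$ and each has size $s \le n$. Treating this as a cross-class matching problem in a complete multipartite graph, one checks that $\max(s,\lceil k/2\rceil) \le n$ factors suffice (pair-lines joining two points from different classes, plus singleton lines when the big class exceeds half). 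Their product is a polynomial in $\Pi_n$ vanishing on $\mathcal{Y}\setminus\{A\}$ but not at $A$, contradicting essential $n$-dependence.

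The remaining case $\#\mathcal{Y} = 2n+2$ is the main obstacle. Now $k = 2n+1$ is odd, so a pure line covering would require $n+1$ factors. The remedy is to replace two lines by a single conic through $5$ chosen points of $\mathcal{Y}\setminus\{A\}$, combined with $n-2$ pair-lines through the remaining $2n-4$ points; this yields an element of $\Pi_n$, and it is an $n$-fundamental polynomial for $A$ \emph{unless} the conic through every admissible $5$-subset passes through $A$. The hard step is to show that this rigidity forces $\mathcal{Y}$ onto a single conic: by varying the $5$-subset (and the accompanying pairing) and comparing the resulting conics via Bezout's theorem (two distinct conics meet in at most $4$ points), one should conclude that all of $\mathcal{Y}\setminus\{A\}$ together with $A$ lies on one common conic $q$. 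Then $\mathcal{Y} \subseteq q$, whence $\mathcal{X} = \mathcal{Y} \subseteq q$, completing the proof.
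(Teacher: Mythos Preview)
The paper does not prove this theorem directly; it is quoted from \cite{EGH96} and then, in Section~4, recovered as the $m=3$ instance of Theorem~\ref{main}, whose proof rests on Proposition~\ref{ess-dep-on-curve-1} and Theorem~\ref{ess-dep-on-curve-2} from \cite{HV19}. In that route one takes an essentially $\kappa$-dependent subset $\mathcal{Y}$ with $\#\mathcal{Y}\le 3\kappa-1$, applies Proposition~\ref{belongsCurve}, and reads off that $\mathcal{Y}$ lies on a curve of degree $r\in\{1,2\}$ with $\#\mathcal{Y}\ge r(\kappa+3-r)$; the two values of $r$ give precisely the collinear and conic alternatives. Your approach is genuinely different: a self-contained construction of fundamental polynomials as products of lines (and one conic), with no appeal to the general curve machinery. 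That is a legitimate and more elementary strategy, and your treatment of the case $\#\mathcal{Y}\le 2n+1$ via the cross-class matching bound $\max(s,\lceil k/2\rceil)\le n$ is correct.

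The gap is in the case $\#\mathcal{Y}=2n+2$. Two things are missing. First, the bookkeeping for ``admissible'' $5$-subsets is not carried out: for the residual $2n-4$ points to be coverable by $n-2$ lines avoiding $A$ you need every residual class (line through $A$) to have size at most $n-2$, so the five removed points must include at least two from each class of size $n$ and at least one from each class of size $n-1$; simultaneously you need no three of the five to be collinear, or else the conic through them is reducible and not uniquely determined by the five points. You do not show these constraints can always be met (and for small $n$ the combinatorics are tight). Second, and more importantly, the rigidity step---``if the conic through every admissible $5$-subset passes through $A$, then all of $\mathcal{Y}$ lies on one conic''---is asserted (``one should conclude'') but not proved. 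Your Bezout idea requires overlapping two admissible $5$-subsets in four points so that the two conics share five points (the four plus $A$) and hence coincide; this works only when both conics are irreducible, and you must then exhibit a chain of such overlapping admissible $5$-subsets reaching every point of $\mathcal{Y}\setminus\{A\}$. Neither the irreducibility nor the connectivity of this ``overlap graph'' is verified. The paper's route via Proposition~\ref{ess-dep-on-curve-1} and Theorem~\ref{ess-dep-on-curve-2} is precisely what bypasses this delicate case analysis.
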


\begin{theorem}[\cite{HM12}, Thm.~5.1]\label{thm:3n}
A set $\Xset$ consisting of at most $3n$ points is $n$-dependent if
and only if at least one of the following conditions hold:

\vspace{-2mm}
\begin{enumerate}
    \setlength{\itemsep}{0mm}

    \item $n+2$ points are collinear;
    \item $2n+2$ points belong to a (possibly reducible) conic;
    \item $\#\mathcal{X} = 3n$, and there exist $\sigma_3 \in \Pi_3$ and $\sigma_n \in \Pi_n$ such that $\mathcal{X} = \sigma_3 \cap \sigma_n.$
\end{enumerate}
\end{theorem}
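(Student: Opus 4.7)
The plan is to prove the two directions separately, with the forward direction being the substantive part.

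For $(\Leftarrow)$, cases (i) and (ii) yield $n$-dependence of $\mathcal{X}$ directly via Corollary~\ref{cor}. For case (iii), $\mathcal{X} = \sigma_3 \cap \sigma_n$ is a complete intersection of curves of degrees $3$ and $n$; the classical Cayley--Bacharach theorem, applicable because $3 + n - 3 = n$, then asserts that every polynomial in $\Pi_n$ vanishing on all but one point of $\mathcal{X}$ also vanishes on the last, so $\mathcal{X}$ is even essentially $n$-dependent.

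For $(\Rightarrow)$, I would start from the $n$-dependent $\mathcal{X}$ and, by Corollary~\ref{essential}, fix an essentially $n$-dependent subset $\mathcal{Y} \subseteq \mathcal{X}$, chosen minimal under inclusion. Let $r \ge 1$ be the smallest degree of a plane curve containing $\mathcal{Y}$. The plan is a case analysis on $r$. If $r = 1$, then Theorem~\ref{col:n+1} applied on the containing line forces $\#\mathcal{Y} \ge n + 2$, giving case (i). If $r = 2$, then minimality of $\mathcal{Y}$ rules out $n + 2$ collinear points inside $\mathcal{Y}$ (those would form a smaller essentially $n$-dependent subset), so Theorem~\ref{prop:2n+1} applied on the containing conic forces $\#\mathcal{Y} = 2n + 2$, giving case (ii).

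The heart of the proof is $r = 3$: $\mathcal{Y}$ lies on a cubic $q$ but on no conic, with $\#\mathcal{Y} \le 3n$. A dimension count is the backbone. Since $\mathcal{Y}$ is $n$-dependent,
\[
  \dim\{p \in \Pi_n : p|_{\mathcal{Y}} = 0\} \ge N - \#\mathcal{Y} + 1,
\]
and this space contains $q \cdot \Pi_{n-3}$ of dimension $\binom{n-1}{2}$. The identity $N - 3n = \binom{n-1}{2}$ ensures, for $\#\mathcal{Y} \le 3n$, the existence of some $f \in \Pi_n$ vanishing on $\mathcal{Y}$ that is not a multiple of $q$. For irreducible $q$, Bezout then gives $\#(q \cap f) \le 3n$ and $\mathcal{Y} \subseteq q \cap f$. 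Promoting this inclusion to the equality $\mathcal{Y} = q \cap f$ (so $\#\mathcal{X} = \#\mathcal{Y} = 3n$, giving case (iii)) is done by combining the essential $n$-dependence of $\mathcal{Y}$ with Cayley--Bacharach on the complete intersection $q \cap f$: if $\mathcal{Y}$ were a proper subset, one could exhibit a fundamental polynomial for some point of $\mathcal{Y}$, contradicting essential $n$-dependence. The values $r \ge 4$ are excluded by the analogous lower bound $\#\mathcal{Y} \ge r(n-r+3)$, which exceeds $3n$ for $r \ge 4$ in all but a handful of small-$n$ cases verified by direct inspection.

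The main obstacle will be this final equality step $\mathcal{Y} = q \cap f$ in the $r = 3$ subcase, together with the reducible-cubic situation, where Bezout must be applied component by component and the arguments from the $r = 1, 2$ subcases re-invoked on each component.
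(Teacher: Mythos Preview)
The paper does not give its own proof of Theorem~\ref{thm:3n}; it is quoted from \cite{HM12} as a known result, and in Section~4 it is recovered (for $n\ge 5$) as the special case $m=4$ of Theorem~\ref{main}. So the relevant comparison is with the proof of Theorem~\ref{main}, and structurally your plan matches it exactly: pass via Corollary~\ref{essential} to an essentially $n$-dependent subset $\mathcal{Y}$, let $r$ be the least degree of a curve through $\mathcal{Y}$, and do a case split on $r$. The difference is that the paper outsources all three hard steps to the \cite{HV19} results already stated here --- Proposition~\ref{ess-dep-on-curve-1} gives $r\le 3$ directly, Theorem~\ref{ess-dep-on-curve-2} gives $\#\mathcal{Y}\ge r(n-r+3)$, and Theorem~\ref{hv-main} turns the equality $\#\mathcal{Y}=3n$ into $\mathcal{Y}=\sigma_3\cap\sigma_n$ --- whereas you try to reprove each of these by hand with a dimension count, B\'ezout, and Cayley--Bacharach.

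Two concrete issues with your direct route. First, your exclusion of $r\ge 4$ is circular: you invoke ``the analogous lower bound $\#\mathcal{Y}\ge r(n-r+3)$'', but that bound \emph{is} Theorem~\ref{ess-dep-on-curve-2}, which you have not established for $r\ge 4$ (your sketch only addresses $r=3$). The clean fix is to use Proposition~\ref{ess-dep-on-curve-1} with $m=3$, which immediately gives $r\le 3$ from $\#\mathcal{Y}\le 3n$. Second, in the $r=3$ step your Cayley--Bacharach argument for $\mathcal{Y}=q\cap f$ tacitly assumes that $q$ and the auxiliary $f$ meet in $3n$ \emph{distinct} points; nothing in the dimension count guarantees transversality, and on a singular or reducible cubic this fails outright. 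Handling these non-transversal and reducible cases is precisely the content packaged into Theorem~\ref{ess-dep-on-curve-2} and Theorem~\ref{hv-main}; if you want a self-contained proof you will have to reproduce that work (component-by-component B\'ezout and a Max--Noether/AF+BG step), and your proposal does not yet indicate how.
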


The following two results describe some properties of essentially dependent point sets laying in a curves of certain degrees.

\begin{proposition}[\cite{HV19}, Prop.~3.3]\label{ess-dep-on-curve-1}
Suppose that $m \le n$. If a set $\mathcal X$ of at most $mn$ points is essentially $\kappa$-dependent then all the points of $\mathcal X$ lay in a curve of degree $m$.
\end{proposition}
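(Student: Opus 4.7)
I would argue by induction on $m$. The base case $m = 1$ forces $\#\mathcal{X} \le n$, whence Theorem~\ref{col:n+1} gives that $\mathcal{X}$ is $n$-independent; combined with essential $n$-dependence, $\mathcal{X}$ must be empty and hence trivially lies on any line.

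For the inductive step, the strategy is to peel off a line. The key reduction is the following: if some line $\ell$ carries at least $n+1$ points of $\mathcal{X}$, then every $p \in \Pi_n$ vanishing on $\mathcal{X} \setminus \{A\}$ for a point $A \in \mathcal{X} \setminus \ell$ must also vanish on $\ell$ (its restriction to $\ell$ is a univariate polynomial of degree at most $n$ with at least $n+1$ zeros), so $p = \ell \cdot q$ with $q \in \Pi_{n-1}$. Since $\ell(A) \ne 0$, the factor $q$ is an $(n-1)$-fundamental polynomial for $A$ with respect to $\mathcal{X}' := \mathcal{X} \setminus \ell$; conversely, any such $q$ multiplies with $\ell$ to yield an $n$-fundamental polynomial for $A$ with respect to $\mathcal{X}$. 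Consequently essential $n$-dependence of $\mathcal{X}$ transfers to essential $(n-1)$-dependence of $\mathcal{X}'$. Applying the inductive hypothesis to $\mathcal{X}'$ would yield a degree-$(m-1)$ curve through it, and multiplication by $\ell$ then produces a degree-$m$ curve through $\mathcal{X}$.

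The main obstacle is that the naive cardinality estimate $\#\mathcal{X}' \le mn - (n+1) = (m-1)n - 1$ fits the inductive frame $(m-1)(n-1)$ only when $m = 2$; for $m \ge 3$ one really needs a line carrying at least $m + n - 1$ points of $\mathcal{X}$, and in general no such line exists---for instance, the six points cut out by an irreducible conic and a cubic form an essentially $2$-dependent configuration with no three collinear. To circumvent this, I would allow peeling by curves of higher degree: if a curve $q$ of degree $k$ without multiple components carries sufficiently many points of $\mathcal{X}$ (more than $nk$ by a Bezout argument, or more than $d(n,k)$ via Proposition~\ref{prop:d(n,k)} when $k \le 3$), the analogous reduction forces $q \mid p$ for all $p \in \Pi_n$ vanishing on $q \cap \mathcal{X}$, transferring essential $n$-dependence of $\mathcal{X}$ to essential $(n-k)$-dependence of $\mathcal{X} \setminus q$. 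The crux of the proof is then the combinatorial claim that under the hypothesis $\#\mathcal{X} \le mn$ and essential $n$-dependence, some curve of appropriate degree $k$ exists so that $\mathcal{X} \setminus q$ fits inside the inductive frame $(m-k)(n-k)$; this, together with excluding the degenerate cases where no such peeling is available, is where I expect the main technical work to lie.
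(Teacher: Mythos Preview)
The paper does not prove this proposition; it is quoted from \cite{HV19} (Prop.~3.3) and used as a black box in the proof of Proposition~\ref{belongsCurve}. There is therefore no argument in the present paper against which to compare your attempt.

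Regarding the attempt itself: you have correctly set up the natural inductive scheme and correctly diagnosed why peeling a single line is insufficient for $m \ge 3$. Your proposed remedy---peeling by a curve $q$ of degree $k$ carrying enough points to force $q \mid p$ for every $p \in \Pi_n$ vanishing on $\mathcal{X} \cap q$---is the right shape, but you explicitly leave open the existence of such a curve with the cardinality bound $\#(\mathcal{X} \setminus q) \le (m-k)(n-k)$ working out. That existence claim is essentially the whole content of the result; the divisibility and transfer-of-essential-dependence steps you describe are routine once the curve is in hand. So what you have is a reasonable plan with the decisive step still unproved, as you yourself acknowledge. (A minor technical caveat: for reducible $q$, the bare condition ``more than $nk$ points on $q$'' does not force $q \mid p$---one needs each irreducible component to carry enough points separately, or a more careful appeal to Proposition~\ref{prop:d(n,k)}.)
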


We say that a curve $\sigma$ is not empty with respect to a set $\mathcal X$ if $X \cap \sigma \neq \varnothing$.

\begin{theorem}[\cite{HV19}, Thm.~3.4]\label{ess-dep-on-curve-2}
Assume that $\sigma_m$ is a curve of degree $m$, which is either irreducible or is reducible such that all its irreducible components are not empty with respect to a set $\mathcal X\subset\sigma_m,$ where $\mathcal X$ is essentially $\kappa$-dependent and $m \le n + 2$. Then we have that $\#\mathcal X \ge mn$.
\end{theorem}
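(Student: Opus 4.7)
The plan is to induct on the degree $m$ of the curve $\sigma_m$. For the base case $m=1$ (a line), Proposition~\ref{prop:d(n,k)}(ii) gives that any $n+1$ points on a line are $n$-independent, since any $p\in\Pi_n$ vanishing at $n+1$ points of a line must have that line as a factor. Thus an essentially $\kappa$-dependent (hence $n$-dependent) set on a line contains at least $n+2>n=mn$ points.

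For the inductive step with $m\ge 2$, suppose first that $\sigma_m$ is reducible and write $\sigma_m=\sigma_{m_1}\cdot\tau$, where $\sigma_{m_1}$ is a chosen irreducible component of degree $m_1<m$. Set $\mathcal X_2=\mathcal X\setminus\sigma_{m_1}\subset\tau$. The key observation is that $\mathcal X_2$ is essentially $(n-m_1)$-dependent on $\tau$: indeed, any $(n-m_1)$-fundamental polynomial $q\in\Pi_{n-m_1}$ for some $A\in\mathcal X_2$ relative to $\mathcal X_2$ would yield, via $p=\sigma_{m_1}\cdot q\in\Pi_n$, an $n$-fundamental polynomial for $A$ relative to $\mathcal X$ (using $\sigma_{m_1}(A)\ne 0$), contradicting the essential $\kappa$-dependence of $\mathcal X$. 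The non-emptiness hypothesis on the components of $\sigma_m$ transfers to $\mathcal X_2$ on $\tau$, so the inductive hypothesis yields $\#\mathcal X_2\ge(m-m_1)(n-m_1)$. Applying the symmetric argument to $\mathcal X\setminus\tau$ on $\sigma_{m_1}$ gives $\#(\mathcal X\setminus\tau)\ge m_1(n-m+m_1)$, and I plan to close the remaining gap using Bezout's bound on $\#(\mathcal X\cap\sigma_{m_1}\cap\tau)$ together with the intersection-point contributions.

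If instead $\sigma_m$ is irreducible, I would argue by a direct Bezout-dimension calculation on the restriction map $\Pi_n\to\mathbb C[\sigma_m]$. When $m\le n$ this map has image of dimension $d(n,m)$ (Proposition~\ref{prop:d(n,k)}) and kernel $\sigma_m\Pi_{n-m}\subset I(\mathcal X)$; when $m>n$ it is injective. In either subcase, any nonzero $p\in\Pi_n$ not divisible by $\sigma_m$ meets $\sigma_m$ in at most $mn$ points. Interpreting $\Pi_n$-sections as global sections of the line bundle $\mathcal O_{\sigma_m}(n)$ of degree $mn$, a Riemann--Roch computation shows that if $\#\mathcal X<mn$ one can, for some $A\in\mathcal X$, find a section vanishing on $\mathcal X\setminus\{A\}$ but not at $A$, contradicting essential $\kappa$-dependence.

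The main obstacle will be the accounting step in the reducible case: the two naive inductive bounds sum to $mn-2m_1(m-m_1)$, short of $mn$ by exactly a Bezout intersection number. Bridging this gap requires a careful analysis of the points $\mathcal X\cap\sigma_{m_1}\cap\tau$, most likely exploiting the stronger form of the non-emptiness hypothesis (each irreducible component of $\sigma_m$ containing a point of $\mathcal X$ not on any other component) to ensure these intersection points are genuinely absorbed rather than double-counted.
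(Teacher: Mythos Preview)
The present paper does not supply a proof of this theorem: it is quoted from \cite{HV19} as a known result in the ``Some known results'' section, so there is no argument here against which to compare yours.

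On its own merits, your proposal has genuine gaps. In the reducible step, the assertion that the non-emptiness hypothesis ``transfers to $\mathcal X_2$ on $\tau$'' is unjustified: an irreducible component of $\tau$ may meet $\mathcal X$ only in points that also lie on $\sigma_{m_1}$, and such points are excised when you form $\mathcal X_2=\mathcal X\setminus\sigma_{m_1}$; the same phenomenon can leave $\mathcal X_2$ (or symmetrically $\mathcal X\setminus\tau$) empty, in which case the inductive hypothesis yields nothing. You also acknowledge, but do not close, the deficit $2m_1(m-m_1)$ in the count. Since $\mathcal X_2$, $\mathcal X\setminus\tau$, and $\mathcal X\cap\sigma_{m_1}\cap\tau$ partition $\mathcal X$, and B\'ezout bounds the last piece by $m_1(m-m_1)$, the three summands together give at best $mn-m_1(m-m_1)$, still short of $mn$; so the sketch cannot be completed along the lines you indicate without an additional idea. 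Finally, in the irreducible case your appeal to Riemann--Roch on $\mathcal O_{\sigma_m}(n)$ tacitly assumes $\sigma_m$ is smooth; for a singular plane curve the identification of restrictions from $\Pi_n$ with global sections of a line bundle, and the handling of points of $\mathcal X$ at singularities, require substantially more care than a one-line invocation.
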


The next result states a necessary and sufficient conditions for a set of $mn$ points to coincide with the set of the intersection points of some two plane algebraic curves of degrees $m$ and $n$, respectively.

\begin{theorem}[\cite{HV19}, Thm.~3.1]\label{hv-main}
A set $\mathcal X$ with $\#\mathcal{X}=mn,\ m \le n$, is the set of intersection points of some two plane curves of degrees $m$ and $n$, respectively, if and only if the following two conditions are satisfied:

\vspace{-2mm}
\begin{enumerate}
    \setlength{\itemsep}{0mm}

    \item The set $\mathcal{X}$ is essentially $(m+n-3)$-dependent;
    \item No curve of degree less than $m$ contains all of $\mathcal X$.
\end{enumerate}
\end{theorem}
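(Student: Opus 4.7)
The plan is to prove the biconditional directionally; the hard work is the reverse implication.

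For the direct implication ($\Rightarrow$), assume $\mathcal{X}=\sigma_m\cap\sigma_n$ with $\#\mathcal{X}=mn$. Bezout's theorem forces $\sigma_m,\sigma_n$ to share no common component. Condition (i) is then the classical Cayley--Bacharach theorem for two plane curves: every polynomial in $\Pi_{m+n-3}$ vanishing on all but one point of the complete intersection $\sigma_m\cap\sigma_n$ must vanish on the whole intersection. For condition (ii), suppose for contradiction some curve $\tau$ of degree $m'<m$ contained $\mathcal{X}$. Then $\tau\cap\sigma_n\supseteq\mathcal{X}$ has $mn>m'n$ points, so by Bezout $\tau$ and $\sigma_n$ share a common component $C$. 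Factoring $\sigma_n=C\sigma_n'$ and $\tau=C\tau'$, and using that $\sigma_m$ has no component in common with $\sigma_n$, one decomposes $\mathcal{X}$ into the part lying on $C$ and the part lying on $\sigma_n'$; a Bezout accounting then forces the latter part to lie on $\tau'$, producing a further shared component between $\sigma_m$ and $\tau'$ of strictly lower degree, which iterates to a contradiction.

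For the reverse implication ($\Leftarrow$), assume (i) and (ii). First, by Proposition \ref{ess-dep-on-curve-1} applied with $\kappa=m+n-3$, the set $\mathcal{X}$ (size $mn$, essentially $\kappa$-dependent, $m\le n$) lies on some curve $\sigma_m$ of degree $m$. Condition (ii) forces $\sigma_m$ to have minimal degree among curves through $\mathcal{X}$, so it is reduced and every irreducible component $C_i$ of $\sigma_m$ meets $\mathcal{X}$ (otherwise stripping a repeated factor or an $\mathcal{X}$-empty component would give a lower-degree curve through $\mathcal{X}$, contradicting (ii)). Theorem \ref{ess-dep-on-curve-2} then applies to $\sigma_m$ and yields $\#\mathcal{X}\ge mn$, tight in our case. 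It remains to produce a curve $\sigma_n$ of degree $n$ through $\mathcal{X}$ having no component in common with $\sigma_m$; granted this, Bezout gives $\#(\sigma_m\cap\sigma_n)\le mn$, and since $\mathcal{X}\subseteq\sigma_m\cap\sigma_n$ with $\#\mathcal{X}=mn$ we obtain the desired equality $\mathcal{X}=\sigma_m\cap\sigma_n$.

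To construct $\sigma_n$, set $L_n:=\{p\in\Pi_n:p|_{\mathcal{X}}=0\}$; trivially $\sigma_m\cdot\Pi_{n-m}\subseteq L_n$ has dimension $\binom{n-m+2}{2}$. I would show, using the essential $(m+n-3)$-dependence hypothesis, that $L_n$ strictly exceeds this subspace and in fact contains an element not divisible by any $C_i$. For each $C_i$, the polynomials of $L_n$ divisible by $C_i$ have the form $C_i\cdot q$ with $q\in\Pi_{n-\deg C_i}$ vanishing on $\mathcal{X}\setminus(\mathcal{X}\cap C_i)$; their dimensions can be bounded by applying Proposition \ref{prop:d(n,k)} and the equality case of Theorem \ref{ess-dep-on-curve-2} to the residual configurations $\mathcal{X}\setminus(\mathcal{X}\cap C_i)$, and a union-of-subspaces count shows the bad subspaces do not exhaust $L_n$. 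The main obstacle is precisely this dimension estimate: a naive count $\dim\Pi_n-\#\mathcal{X}$ only beats $\binom{n-m+2}{2}$ when $m\le 2$, so for general $m$ one must translate the Cayley--Bacharach-type content of essential $(m+n-3)$-dependence into the existence of a degree-$n$ polynomial with controlled factorization relative to the components of $\sigma_m$.
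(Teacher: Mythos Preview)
The paper does not prove this theorem at all: it is quoted in Section~2 (``Some known results'') with the citation \cite{HV19}, Thm.~3.1, and is used as a black box in the proof of Theorem~\ref{main}. So there is no in-paper proof to compare against; any assessment must be of your argument on its own merits.

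Your forward direction is fine in outline (Cayley--Bacharach gives (i); a Bezout/common-component argument gives (ii)), although the iteration you sketch for (ii) is more delicate than you indicate once $\sigma_n$ is reducible and intersection points may lie on several components.

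The reverse direction, however, has a genuine gap that you yourself flag. After placing $\mathcal{X}$ on a minimal-degree curve $\sigma_m$ via Proposition~\ref{ess-dep-on-curve-1} and condition~(ii), the entire content of the theorem lies in producing a degree-$n$ curve $\sigma_n$ through $\mathcal{X}$ sharing no component with $\sigma_m$. Your proposed route---showing $L_n=\{p\in\Pi_n:p|_{\mathcal{X}}=0\}$ is not covered by the ``bad'' subspaces of polynomials divisible by some component $C_i$ of $\sigma_m$---is not carried out, and as you note the naive dimension count $\dim\Pi_n-\#\mathcal{X}$ versus $\binom{n-m+2}{2}$ fails for $m\ge 3$. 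The phrases ``I would show'' and ``one must translate the Cayley--Bacharach-type content \ldots'' are exactly where the proof is missing: you have not explained how essential $(m+n-3)$-dependence forces $\dim L_n>\dim(\sigma_m\cdot\Pi_{n-m})$, nor why the union of the $C_i$-divisible subspaces cannot exhaust $L_n$ (a union of proper subspaces over $\mathbb{C}$ is never the whole space, but you have not established that each such subspace is proper in $L_n$). Until that step is supplied, the argument is a plan rather than a proof.
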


\section{Main result}

By combining Proposition \ref{ess-dep-on-curve-1} and Theorem \ref{ess-dep-on-curve-2} we readily get the following

\begin{proposition}\label{belongsCurve}
Suppose that $\mathcal{X}$ is an essentially $\kappa$-dependent point set with $\#\mathcal X\le mn-1$, where  $m \le n$, and $\kappa = m + n - 3$. Then there exists a number $r$, $1 \le r \le m-1$, and a curve $\sigma_r$ of degree $r$, such that the following conditions hold:
\begin{enumerate}
    \setlength{\itemsep}{0mm}
    \item $\#\mathcal{X} \ge rs$, where $r + s - 3 = \kappa$;
    \item $\sigma_r$ contains all of $\mathcal{X}$
    \item There is no curve of degree less than $r$ containing all of $\mathcal{X}$.
    
\end{enumerate}
\end{proposition}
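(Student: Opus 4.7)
The plan is to let $r$ be the least positive integer such that some curve of degree $r$ passes through all of $\mathcal{X}$, and to fix one such curve $\sigma_r$ of minimal degree. By construction, this $\sigma_r$ witnesses condition (ii), and the minimality of $r$ is exactly condition (iii), so the real content is to establish $1 \le r \le m-1$ together with $\#\mathcal{X} \ge rs$, and both will be extracted from Theorem \ref{ess-dep-on-curve-2}. A key preliminary observation is that minimality forces every irreducible component of $\sigma_r$ to contain at least one point of $\mathcal{X}$: if some component were empty with respect to $\mathcal{X}$, deleting it would produce a curve of strictly smaller degree still containing $\mathcal{X}$, contradicting the choice of $r$. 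This places us squarely inside the hypotheses of Theorem \ref{ess-dep-on-curve-2}.

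The existence of such an $r$ with $r \le m$ is supplied by Proposition \ref{ess-dep-on-curve-1}, applied to the essentially $\kappa$-dependent set $\mathcal{X}$ of size at most $mn$. To upgrade this bound to $r \le m-1$, I would argue by contradiction: assume $r = m$. Then $\sigma_r = \sigma_m$ has all of its irreducible components non-empty with respect to $\mathcal{X}$, the hypothesis $m \le n \le n+2$ holds, and Theorem \ref{ess-dep-on-curve-2}, applied with the original parameters $m$, $n$, $\kappa = m+n-3$, would force $\#\mathcal{X} \ge mn$, directly contradicting $\#\mathcal{X} \le mn-1$. Hence $r \le m-1$.

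For the size bound, I would set $s := m+n-r$, so that the required identity $r + s - 3 = \kappa$ is automatic. A second invocation of Theorem \ref{ess-dep-on-curve-2}, now with $(r, s)$ playing the role of $(m, n)$, yields $\#\mathcal{X} \ge rs$; the side condition $r \le s+2$ reduces to $2r \le m+n+2$, which is clear from $r \le m-1 \le m \le n$. The single delicate point I expect is this parameter substitution in Theorem \ref{ess-dep-on-curve-2}: one must check that replacing the pair $(m, n)$ by $(r, s)$ while retaining the same value of $\kappa$ is legitimate, and this is precisely what the identity $r+s-3 = \kappa = m+n-3$ guarantees. Once this bookkeeping is made explicit, the argument collapses to two applications of Theorem \ref{ess-dep-on-curve-2} with Proposition \ref{ess-dep-on-curve-1} used as a starter.
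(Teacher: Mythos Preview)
Your proposal is correct and follows essentially the same route as the paper: take a curve of minimal degree $r$ through $\mathcal{X}$, use Proposition~\ref{ess-dep-on-curve-1} to bound $r\le m$, rule out $r=m$ via Theorem~\ref{ess-dep-on-curve-2}, and then apply Theorem~\ref{ess-dep-on-curve-2} again with parameters $(r,s)$ to obtain $\#\mathcal{X}\ge rs$. Your explicit justification that minimality of $r$ forces every irreducible component of $\sigma_r$ to meet $\mathcal{X}$ is a point the paper leaves as a ``without loss of generality'' remark, so your write-up is in fact slightly more detailed there.
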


\begin{proof}
We obtain from Proposition \ref{ess-dep-on-curve-1}  that the set of points $\mathcal{X}$ lies in a curve $\sigma$ of degree at most $m$. Without loss of generality we may assume that $\sigma$ is either irreducible or is reducible such that all its irreducible components are not empty with respect to the set $\mathcal X.$ Then, notice that the degree of the curve does not equal $m$, since in that case, in view of Theorem \ref{ess-dep-on-curve-2}, we would have that $\#X \ge mn$. Finally, consider such a curve $\sigma_r$ of the smallest possible degree  $1 \le r \le m - 1$. Note that $r < \kappa - r + 3$ since $r < m \le \kappa - m + 3$. Now, Theorem \ref{ess-dep-on-curve-2} implies that $\#\mathcal{X} \ge rs$, where $r + s - 3 = \kappa$.
\end{proof}

Now we are in a position to formulate the main result of the paper:

\begin{theorem}\label{main}
Suppose that $\mathcal{X}$ is a set of points such that $\#\mathcal X \le mn - 1$, where $m \le n$. Then $\mathcal{X}$  is $\kappa$-dependent, where $\kappa = m + n - 3$, if and only if there exist a number $r,\ 1\le r \le m - 1$, and an essentially  $\kappa$-dependent subset $\mathcal{Y} \subset \mathcal{X},\ $ $\#\mathcal{Y}\ge rs$, where $r + s - 3 = \kappa$, belonging to a curve of degree $r$, and not belonging to any curve of degree less than $r$. \\  Moreover, if $\#\mathcal{Y} = rs$ then we have that $\mathcal{Y}$ coincides with the set of intersection points of some two plane curves of degrees $r$ and $s$ respectively.
\end{theorem}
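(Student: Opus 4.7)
The plan is to verify that this theorem is essentially an assembly of the results developed in the preceding sections, applied to a well-chosen subset of $\mathcal{X}$, with the ``moreover'' clause following immediately from Theorem~\ref{hv-main}.

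For the forward direction, suppose $\mathcal{X}$ is $\kappa$-dependent. By Corollary~\ref{essential}, $\mathcal{X}$ contains an essentially $\kappa$-dependent subset $\mathcal{Y}$. Since $\#\mathcal{Y}\le\#\mathcal{X}\le mn-1$ and $\kappa=m+n-3$, the set $\mathcal{Y}$ satisfies exactly the hypotheses of Proposition~\ref{belongsCurve}. Applying that proposition produces the integer $r$ with $1\le r\le m-1$ and a curve $\sigma_r$ of degree $r$ containing $\mathcal{Y}$, such that no curve of degree less than $r$ contains $\mathcal{Y}$ and $\#\mathcal{Y}\ge rs$ where $r+s-3=\kappa$. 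This is exactly the existential statement required.

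For the backward direction, assume such a subset $\mathcal{Y}\subset\mathcal{X}$ exists. Since $\mathcal{Y}$ is essentially $\kappa$-dependent (and non-empty, because $\#\mathcal{Y}\ge rs\ge 1$), some point $A\in\mathcal{Y}$ has no $\kappa$-fundamental polynomial with respect to $\mathcal{Y}$. I would then observe that $A$ cannot admit a $\kappa$-fundamental polynomial $p$ with respect to the larger set $\mathcal{X}$ either: such a $p$ would vanish on $\mathcal{X}\setminus\{A\}\supseteq\mathcal{Y}\setminus\{A\}$ with $p(A)=1$, which would contradict the choice of $A$. Hence $\mathcal{X}$ is $\kappa$-dependent.

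For the ``moreover'' part, suppose $\#\mathcal{Y}=rs$. I would apply Theorem~\ref{hv-main} to $\mathcal{Y}$ with parameters $(r,s)$ in place of $(m,n)$. The two required hypotheses are built into the statement: $\mathcal{Y}$ is essentially $(r+s-3)$-dependent, and no curve of degree less than $r$ contains $\mathcal{Y}$. The only thing to verify is the ordering $r\le s$ needed to invoke Theorem~\ref{hv-main}; this is immediate because $r\le m-1<m\le n$ combined with $r+s=m+n$ forces $s=m+n-r>n\ge r$. Thus $\mathcal{Y}$ is the complete intersection of two curves of degrees $r$ and $s$.

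I do not expect a genuine obstacle here, since every nontrivial step has been isolated into the propositions of Section~3 and the cited results of~\cite{HV19}. The only delicate points are verifying that dependence of $\mathcal{Y}$ transfers to $\mathcal{X}$ (handled by the short fundamental-polynomial argument above) and checking the inequality $r<s$ so that Theorem~\ref{hv-main} is applicable for the final claim.
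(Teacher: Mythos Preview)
Your proof is correct and follows essentially the same route as the paper: Corollary~\ref{essential} followed by Proposition~\ref{belongsCurve} for the forward direction, and Theorem~\ref{hv-main} together with the verification $r<s$ for the ``moreover'' clause. The only difference is cosmetic: you spell out the elementary argument that dependence of $\mathcal{Y}$ forces dependence of $\mathcal{X}$, whereas the paper dismisses this as obvious.
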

\begin{proof}
The sufficiency part is obvious. If some set has a $\kappa$-dependent subset then the set itself is $\kappa$-dependent. Now let us prove the part of necessity.

We have that the set $\mathcal{X}$ is $\kappa$-dependent. By the Corollary \ref{essential} there exists some essentially $\kappa$-dependent subset $\mathcal{Y} \subset \mathcal{X}$. 

Now, applying Proposition \ref{belongsCurve}
to the set of points $\mathcal Y$ we get that 
there exists a curve $\sigma_r$ of degree $r$, $1 \le r \le m-1$, containing all of $\mathcal{Y}$, such that $\#\mathcal{Y} \ge rs$, where $r + s - 3 = \kappa$,
and
there is no curve of lower degree containing all of $\mathcal{Y}.$

Finally, let us prove the "moreover" part of the theorem. Here we have that $\mathcal{Y}$ is essentially $\kappa$-dependent, $\#\mathcal{Y} = rs$ and there is no curve of degree less than $r$ passing through all the points of $\mathcal{Y}.$ Notice also that $r < s$ since $r < m < \kappa - r + 3.$ Hence we get from Theorem \ref{hv-main} that $\mathcal{Y}$ is the set of intersection points of some two plane curves of degrees $r$ and $s$, respectively. 
\end{proof}

Now, let us mention some necessary conditions for the set $\mathcal{X}$ to be able to apply Theorem \ref{main}. Suppose that we have a $\kappa$-dependent set $\mathcal{X}$. We need to find such numbers $m$ and $n$, for which $\#\mathcal{X} \le mn$, where $m + n - 3 = \kappa$ and $m \le n$. Since the the expression $mn$ achieves its maximum when $m = n = (\kappa + 3) / 2$, then $\#\mathcal{X}$ must be not more than $\floor{(\kappa+3)^2/4}$.

\section{Some special cases of Theorem \ref{main}}

In this section we verify that Theorem \ref{main} is a generalization of Theorems \ref{col:n+1}, \ref{prop:2n+1} and \ref{thm:3n}. For this purpose let us formulate Theorem \ref{main} in the special cases with $m=1, 2, 3, 4$.

\vspace{0.2cm}
\noindent{Case $m=1.$} \\
A set $\mathcal{X}$ of at most $\kappa + 1$ points is never $\kappa$-dependent.

\noindent This is equivalent to Theorem \ref{col:n+1}.

\vspace{0.2cm}
\noindent{Case $m=2.$} \\
A set $\mathcal{X}$ of at most $2\kappa + 1$ points is $\kappa$-dependent if and only if $\kappa + 2$ points of $\mathcal{X}$ are collinear.

\vspace{0.2cm}
\noindent{Case $m=3.$} \\
A set $\mathcal{X}$ of at most $3\kappa - 1$ points is $\kappa$-dependent, if and only if one of the following conditions hold:
\begin{enumerate}
    \setlength{\itemsep}{0mm}
    \item $\kappa + 2$ points of $\mathcal{X}$ belong to a line,
    \item $2\kappa + 2$ points of $\mathcal{X}$ belong to a conic.
\end{enumerate}

\noindent Clearly the statement in Case $m=3$ is a generalization of Theorem \ref{prop:2n+1}.

\vspace{0.2cm}
\noindent{Case $m=4.$} \\
A set $\mathcal{X}$ of at most $4\kappa - 5$ points is $\kappa$-dependent, if and only if one of the following conditions hold:
\begin{enumerate}
    \setlength{\itemsep}{0mm}
    \item $\kappa + 2$ points of $\mathcal{X}$ belong to a line,
    \item $2\kappa + 2$ points of $\mathcal{X}$ belong to a conic,
    \item $\#\mathcal{X} = 3\kappa$ and $\mathcal{X}$ coincides with an intersection points of some two algebraic curves of degrees $3$ and $\kappa$,
    \item more than $3\kappa$ points of $\mathcal{X}$ belong to a cubic.
\end{enumerate}

\noindent Finally, this statement generalizes Theorem \ref{thm:3n}.

Note that in above statements we used Corollary \ref{cor}.

\end{document}